\theoremstyle{cupthm}
\newtheorem{theorem}{Theorem}[section]
\newtheorem{proposition}[theorem]{Proposition}
\theoremstyle{cupdefn}
\theoremstyle{cuprem}
\numberwithin{equation}{section}
\newtheorem{question}[theorem]{Question}
\newcommand{\Z}{{\mathbb Z}}
\newcommand{\Q}{{\mathbb Q}}
\begin{document}
\title{On Properly $\theta$-Congruent Numbers Over Real Number Fields}
\author{Sajad Salami}
\address{Institute of Mathematics and Statistics, Rio de Janeiro State University, Rio de Janeiro, RJ, Brazil}
\email{sajad.salami@ime.uerj.br}

\author{Arman Shamsi Zargar}
\address{Department of Mathematics and Applications, University of Mohaghegh Ardabili, Ardabil, Iran}
\email{zargar@uma.ac.ir}
\date{}




\begin{abstract}
The notion of $\theta$-congruent numbers  generalizes the classical congruent number problem.  Recall that a  positive integer $n$ is $\theta$-congruent if it is the area of a rational triangle with an angle $\theta$ whose cosine is rational. Das and Saikia~\cite{DasSaikia} established criteria for numbers to be $\theta$-congruent over certain real number fields and concluded their work by posing four open questions regarding the relationship between $\theta$-congruent and properly $\theta$-congruent numbers.  In this work, we provide  complete answers to those questions. Indeed, we remove a technical assumption from their result  on fields with degrees coprime to $6$, provide a definitive answer for real cubic fields without congruence restrictions, extend the analysis to fields of degree~$6$, and  examine  the exceptional cases 
$n=1, 2, 3$ and $6$.

\bigskip

\noindent	{\bf AMS subjects (2020):} primary 11G05; secondary 11R21, 11R16

\bigskip

\noindent 	{\bf Keywords:} Properly $\theta$-Congruent Number, Elliptic Curve.

\end{abstract}

\maketitle

\section{Introduction}

A positive integer $n$ is a \textit{congruent number} if it is the area of a right triangle with rational sides. This classical problem is deeply connected to the arithmetic of the elliptic  curves  $E_n: y^2 = x(x^2 - n^2)$.  It is well-known that  $n$ is congruent if and only if the Mordell--Weil group $E_n(\Q)$ has positive rank~\cite{Koblitz}.

The notion of congruent numbers has been generalized to $\theta$-congruent numbers, by Fujiwara,  as follows. Let $\theta \in (0, \pi)$ be an angle with $\cos\theta = s/r \in \Q$ for coprime integers $s, r$. A positive integer $n$ is called \textit{$\theta$-congruent} if there exists a triangle with rational sides $u,v,w$ having an angle $\theta$ between the sides $u$ and $v$, and area $n\sqrt{r^2-s^2}$. This problem is equivalent to studying the elliptic curves
\[ E_{n,\theta}: y^2 = x(x+(r+s)n)(x-(r-s)n). \]
In particular, a  number $n$ is $\theta$-congruent if and only if $E_{n,\theta}(\Q)$ has a rational point of order greater than $2$,  see \cite{Fujiwara} for more details.

Das and Saikia \cite{DasSaikia} extended this problem to number fields. A number $n$ is \textit{$(K, \theta)$-congruent} if there exists a triangle with sides $u,v,w$ in a number field $K$ having an angle $\theta$ between $u$ and $v$, and area $n\sqrt{r^2-s^2}$. It is \textit{properly $(K, \theta)$-congruent} if there are infinitely many such triangles, which is equivalent to the condition that the rank of $E_{n,\theta}(K)$ is positive. The central question is: 
\begin{question}
	When does  a  $(K, \theta)$-congruent  number  imply  a  properly $(K, \theta)$-congruent  number? 	
\end{question}

In \cite{DasSaikia}, Das and Saikia established criteria for numbers to be $\theta$-congruent over certain real number fields and concluded their work by posing four open questions regarding the relationship between $\theta$-congruent and properly $\theta$-congruent numbers. 

In this work, we provide complete answers to the questions. Indeed, we show that the equivalence between being $\theta$-congruent and properly $\theta$-congruent holds more broadly than previously established, extending the results to all real number fields with degrees coprime to $6$ and to all real cubic fields. Furthermore, we clarify the situation for fields with degrees divisible by $6$, specifically sextic fields, and provide a detailed analysis of the exceptional integers $n=1, 2, 3, 6$.

\section{Main Results}

\subsection{Fields of degree coprime to $6$}
We answer the first question by removing a  technical assumption  from  Theorem~2.12 of \cite{DasSaikia}.

\begin{theorem}\label{th1}
	Suppose $n$ is a square-free natural number other than $1, 2, 3$ or $6$.  Let $K$ be a real number field such that $[K:\Q]$ is coprime to $6$. Then $n$ is a $(K, \theta)$-congruent number if and only if it is a properly $(K, \theta)$-congruent number.
\end{theorem}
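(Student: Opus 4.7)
The plan is to reformulate the theorem as a torsion statement and then prove it. Being $(K,\theta)$-congruent is equivalent to $E_{n,\theta}(K)$ having a point of order greater than~$2$, while being properly $(K,\theta)$-congruent is equivalent to $E_{n,\theta}(K)$ having positive Mordell--Weil rank. The reverse implication is immediate, so the substance of the theorem reduces to establishing $E_{n,\theta}(K)_{\mathrm{tors}}=(\Z/2\Z)^2$ under the hypotheses; granted this, any $K$-point of order greater than~$2$ must have infinite order, and the rank is positive.

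Since $E_{n,\theta}$ has three $\Q$-rational Weierstrass roots, its full $2$-torsion is already $\Q$-rational, and Fujiwara's theorem gives $E_{n,\theta}(\Q)_{\mathrm{tors}}=(\Z/2\Z)^2$ for square-free $n\notin\{1,2,3,6\}$. It therefore suffices to rule out torsion growth from $\Q$ to $K$. I would argue prime by prime. For a point $P\in E_{n,\theta}(K)$ of order~$4$: the identity $2P\in E_{n,\theta}(\Q)[2]$ puts $P$ in the degree-$4$ fiber of multiplication by~$2$, and a standard cocycle argument shows its Galois orbit has size dividing $|E_{n,\theta}[2]|=4$; coprimality of $[K:\Q]$ with~$2$ then forces $P\in E_{n,\theta}(\Q)$, contradicting Fujiwara. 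For a point $P$ of order~$3$: the Galois orbit of $P$ in $E_{n,\theta}[3]\setminus\{0\}$ has size at most~$8$ and divides $|GL_2(\F_3)|=48$, giving $[\Q(P):\Q]\in\{1,2,3,4,6,8\}$; coprimality with~$6$ forces $[\Q(P):\Q]=1$, that is, $P\in E_{n,\theta}(\Q)$, again a contradiction. A short iteration on preimages of $[2]$ and $[3]$ extends these arguments to all prime powers of~$2$ and~$3$.

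The main obstacle is ruling out a point $P\in E_{n,\theta}(K)$ of prime order $\ell\geq 5$, where the Galois orbit of $P$ can a priori have size coprime to~$6$. I would distinguish cases according to the mod-$\ell$ Galois image $\rho_\ell(\gal(\bar\Q/\Q))\subseteq GL_2(\F_\ell)$. If this image contains $SL_2(\F_\ell)$, its action on $E_{n,\theta}[\ell]\setminus\{0\}$ is transitive, so $[\Q(P):\Q]=\ell^2-1$, which is even for odd~$\ell$ and hence incompatible with $\gcd([K:\Q],2)=1$. If the image lies in a Borel subgroup, then $E_{n,\theta}$ admits a $\Q$-rational cyclic $\ell$-isogeny, and Mazur's isogeny theorem restricts $\ell$ to the finite set $\{5,7,11,13,17,19,37,43,67,163\}$; for each such $\ell$, a direct inspection of the finitely many rational points on $X_0(\ell)$ shows that no $E_{n,\theta}$ with square-free $n\notin\{1,2,3,6\}$ has the corresponding $j$-invariant. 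The remaining exceptional images (normalizers of split or non-split Cartan subgroups, or those isomorphic to $A_4$, $S_4$, $A_5$) are handled by similar finite verifications. This case analysis is precisely where the technical assumption of Das and Saikia is removed.
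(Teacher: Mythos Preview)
Your reduction to the torsion statement and your handling of the $2$- and $3$-primary torsion are correct and in fact more explicit than the paper's argument, which simply asserts that no new $2$- or $3$-power torsion can appear when $\gcd([K:\Q],6)=1$ and then defers entirely to the results of Gonz\'alez-Jim\'enez and Najman for primes $\ell\ge 5$. So the two proofs genuinely diverge at the $\ell\ge 5$ step: the paper cites a black-box classification of torsion growth, while you attempt the mod-$\ell$ image analysis by hand.

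Your outline for $\ell\ge 5$, however, has a real gap. The trichotomy ``image contains $SL_2$ / Borel / everything else'' is fine, but your plan for the latter cases is to show that $E_{n,\theta}$ is never such a curve and then fall back on the transitive $SL_2$-orbit. This cannot work uniformly in $\theta$: for $\theta=\pi/2$ the curve $E_{n,\pi/2}:y^2=x^3-n^2x$ has complex multiplication by $\Z[i]$, so for \emph{every} prime $\ell$ the mod-$\ell$ image sits inside the normalizer of a Cartan and never contains $SL_2(\F_\ell)$. Thus the Cartan-normalizer case is not a finite list of sporadic $j$-invariants to be excluded; it must be analyzed internally by computing orbit sizes (which are indeed even for the full normalizer, but this needs to be said, and proper subgroups must be controlled). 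Similarly, your Borel check---``no $E_{n,\theta}$ with square-free $n\notin\{1,2,3,6\}$ has the corresponding $j$-invariant''---is mis-stated: $j(E_{n,\theta})$ depends only on $\theta$, not on $n$, and as $\theta$ varies $j$ ranges over infinitely many values, so this is not a finite inspection. What you actually need is either a uniform argument that the relevant orbit sizes are never coprime to $6$, or an appeal to the classification of degrees of torsion-point fields---which is exactly what the Gonz\'alez-Jim\'enez--Najman paper supplies and what the paper's proof invokes.
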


\begin{proof}
	Let $E = E_{n,\theta}$. By Proposition~2.1 of \cite{DasSaikia}, $E(\Q)_{\text{tors}} \cong \Z/2\Z \times \Z/2\Z$. We know that a number $n$ is $(K, \theta)$-congruent but not properly so if and only if $E(K)$ contains a torsion point of order greater than 2. Since $[K:\Q]$ is coprime to $2$ and $3$, any potential new torsion point must have prime order $p \ge 5$.  
	
	The argument in the proof of Theorem~2.12 of \cite{DasSaikia} handles all primes $p \ge 5$ except for $p=11$, where it  relies  on an assumption about the degree of $K$. The potential issue arises if a point of order~$11$ defined over a field of degree~$55$ exists within $K$. However, the classification of torsion growth over number fields by Gonz\'{a}lez-Jim\'{e}nez and Najman \cite{GonzalezNajman} provides a complete picture. Specifically, their work shows that for an elliptic curve $E/\Q$, the  $p$-primary torsion subgroup of $E(K)_{\text{tors}}$, denoted $E(K)[p^\infty]$, can only grow if the smallest prime dividing $[K:\Q]$ is less than or equal to certain bounds related to the image of the Galois representation $\rho_{E,p}$.
	
	For $p=11$, the possible degrees of number fields generated by an $11$-torsion point are listed in Theorem~5.8 of \cite{GonzalezNajman}. These degrees are $5$, $10$, $11$, $20$, $22$, $40$, $44$, $55$, $88$, $110$ and $120$. If $K$  contains  such a field, its degree $[K:\Q]$  must be a multiple of one of these values. As $[K:\Q]$ is coprime to $6$, the only possibilities are degrees divisible by $5$, $11$, or $55$.
	The argument in \cite{DasSaikia} provides a detailed analysis that correctly rules out $5$-torsion over such fields. For $11$-torsion, if $[K:\Q]$ is divisible by $5$ or $11$ (but not $55$), the results of \cite{GonzalezNajman} show that no new $11$-torsion can appear. The crucial point is that even if $[K:\Q]$ is divisible by $55$, the structure of the image of the Galois representation associated with $11$-torsion points does not permit the existence of such a point in a field whose degree is coprime to $2$ and $3$, see~\cite{GonzalezNajman}. Hence, $E(K)_{\text{tors}}$ cannot contain points of order $5$, $7$, $11$, or any other prime $p \ge 5$. The torsion subgroup remains $E(K)_{\text{tors}} = E(K)[2]$. Hence, by Lemma~2.3 of \cite{DasSaikia}, $n$ is $(K, \theta)$-congruent if and only if $E(K) \setminus E(K)[2] \neq \emptyset$, which implies the existence of a point of infinite order.  
\end{proof}

\subsection{Cubic fields without congruence conditions}
We now remove the congruence conditions from Theorem~2.14 of \cite{DasSaikia} to provide an answer for the second question.

\begin{theorem}\label{th2}
	Suppose $n$ is a square-free natural number other than $1, 2, 3$ or $6$. Let $K$ be any real cubic number field. Then $n$ is a $(K, \theta)$-congruent number if and only if it is a properly $(K, \theta)$-congruent number.
\end{theorem}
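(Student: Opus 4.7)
The plan parallels that of Theorem~\ref{th1}. Set $E = E_{n,\theta}$; by Proposition~2.1 of \cite{DasSaikia}, $E(\Q)_{\text{tors}} \cong \Z/2\Z\times\Z/2\Z$, and by Lemma~2.3 of \cite{DasSaikia}, the equivalence follows once I show that this torsion does not grow under base-change to a real cubic field $K$, since then any $K$-rational point not of order $2$ must have infinite order.

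For the $2$-primary part: since $[K:\Q]=3$ is odd and the full $2$-torsion of $E$ is already rational, any $K$-rational point of order $4$ would have $x$-coordinate of degree dividing both $2$ and $3$ over $\Q$, hence in $\Q$; but $E(\Q)$ has no $4$-torsion by Proposition~2.1. For odd primes, invoking the classification of torsion growth of $E/\Q$ over cubic fields (see \cite{GonzalezNajman} and the earlier cubic-field results of Najman therein), the only admissible growths from $(\Z/2\Z)^2$ are to $\Z/2\Z\times\Z/6\Z$ or $\Z/2\Z\times\Z/14\Z$, so it suffices to exclude $K$-rational points of orders $3$ and $7$.

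For each $p \in \{3, 7\}$, a $K$-rational point $(x_0, y_0)$ of order $p$ has $[\Q(x_0):\Q] \in \{1, 3\}$. If $x_0 \in \Q$, then $y_0 \in K$ satisfies $y_0^2 \in \Q$, so $[\Q(y_0):\Q] \in \{1,2\}$; combined with $\Q(y_0) \subseteq K$ cubic, this forces $y_0 \in \Q$, yielding a rational $p$-torsion point and contradicting Proposition~2.1. Thus I may assume $[\Q(x_0):\Q] = 3$, so the $p$-division polynomial $\psi_p$ of $E$ has an irreducible cubic factor over $\Q$ whose root generates $K$ and whose corresponding $y$-value lies in $K$. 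For $p=3$, the relevant quartic is
\[
\psi_3(x) \;=\; 3x^4 + 8sn\,x^3 - 6(r^2-s^2)n^2 x^2 - (r^2-s^2)^2 n^4,
\]
and an irreducible cubic factor forces the complementary factor to be linear, producing a rational root $x_0' \in \Q$; the Galois action on the four order-$3$ $x$-coordinates then fixes $x_0'$ and permutes the three cubic-conjugate roots transitively. Combining this Borel-type image of $\rho_{E,3}$ with the reality of $K$ (which excludes $\mu_3$ and hence full $3$-torsion in $E(K)$) and with the rational $(\Z/2\Z)^2$-level structure of $E_{n,\theta}$ forces a rational $3$-torsion point, contradicting Proposition~2.1. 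For $p = 7$, the same reduction identifies $E_{n,\theta}$ as a potential $\Q$-point on a modular curve of level divisible by $14$ (compatible with full rational $2$-torsion together with a cubic $7$-isogeny kernel), and a finite check against the $j$-invariants of such $\Q$-points rules out all squarefree $n\neq 1,2,3,6$.

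The main obstacle will be the detailed Galois-theoretic analysis of the $p=3$ case, specifically the subcase where $\psi_3$ has a rational root with irrational $y$-coordinate (so no rational $3$-torsion point appears directly) while simultaneously possessing an irreducible cubic factor producing $K$-rational order-$3$ points; ruling this out requires carefully combining the Borel image of $\rho_{E,3}$ with the reality of $K$ and with the rational $(\Z/2\Z)^2$-level structure of $E_{n,\theta}$. Once both $p=3$ and $p=7$ are excluded, $E(K)_{\text{tors}} = (\Z/2\Z)^2$, and Lemma~2.3 of \cite{DasSaikia} completes the proof.
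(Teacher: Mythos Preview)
Your reduction to controlling $3$- and $7$-torsion over the cubic field $K$ is sound, and you correctly observe that a $K$-rational $3$-torsion point forces $\psi_3$ to split off an irreducible cubic factor over $\Q$, hence also a rational linear factor. At this juncture, however, your plan diverges from the paper and runs into a genuine difficulty.

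The paper does not attempt any Galois-representation argument here. It simply shows, by an elementary analysis of possible integer roots, that the quartic $f(x)$ (a rescaling of $\psi_3$) has no rational root. Once that is established, $f$ cannot split as (linear)$\times$(cubic), and since the remaining factorization types (irreducible quartic, or product of two irreducible quadratics) yield roots only in extensions of degree~$2$ or~$4$, no root can lie in a cubic field and $3$-torsion is excluded outright. The $p=7$ case is dispatched by citing Proposition~2.15 of \cite{DasSaikia}, so no modular-curve check is needed.

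Your alternative---deducing from the rational root $x_0'$ a \emph{rational} $3$-torsion point via ``Borel image $+$ reality of $K$ $+$ full rational $2$-torsion''---is where the gap lies. If one tracks the image of $\rho_{E,3}$ under your hypotheses, it is forced to be the order-$6$ subgroup $\bigl\langle \left(\begin{smallmatrix}1&1\\0&1\end{smallmatrix}\right), \left(\begin{smallmatrix}-1&0\\0&1\end{smallmatrix}\right)\bigr\rangle$ of the Borel (in a basis $\{P',P_0\}$), and the diagonal character on the stable line $\langle P'\rangle$ is then the mod-$3$ cyclotomic character. This places $(x_0',y_0')$ in $E(\Q(\sqrt{-3}))$, not in $E(\Q)$, so no contradiction with Proposition~2.1 of \cite{DasSaikia} appears from the group theory alone. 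The rational $(\Z/2\Z)^2$-level structure does not obstruct this configuration: curves over $\Q$ with full $2$-torsion, no rational $3$-torsion, but a $3$-torsion point over $\Q(\sqrt{-3})$ (and hence over a real cubic subfield of $\Q(E[3])$) do exist---take the $(-3)$-twist of any curve with $E'(\Q)_{\mathrm{tors}}\cong\Z/2\Z\times\Z/6\Z$ and no $3$-torsion growth over $\Q(\sqrt{-3})$. To close your argument you would still have to show that $E_{n,\theta}$ is never such a curve, and that is precisely the statement that $\psi_3$ has no rational root---the direct computation the paper carries out.
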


\begin{proof}
	Let $K$ be a real cubic field. By Proposition~2.15 of \cite{DasSaikia}, the only possible torsion structures for $E_{n,\theta}(K)$ are $\Z/2\Z \times \Z/2\Z$ or $\Z/2\Z \times \Z/6\Z$. The latter occurs if and only if $E_{n,\theta}$ acquires a rational $3$-torsion point over $K$. The existence of such a point is determined by the roots of the third division polynomial. Indeed, as shown in \cite{DasSaikia}, a point of order~$3$ exists if and only if the polynomial
	\[ f(x) = x^4 - 6(3r^2+s^2)x^2 + 8s(9r^2-s^2)x - 3(3r^2+s^2)^2 \]
	has a root in $K$.  
	The original proof showed that under certain congruence conditions on $r$ and $s$, $f(x)$ is irreducible over $\Q$, and since its degree is $4$, it cannot have a root in a cubic field $K$.
	
	We now show this holds for any coprime integers $r, s$ where $s \neq 0$. To see more on the Galois group of a quartic polynomial, one can consult \cite{Dummkit-Foote}. 
	
	The Galois group of $f(x)$ over $\Q$ determines its splitting behavior. A standard computation of the discriminant and resolvent  cubic of $f(x)$ shows that its Galois group over $\Q$ is typically $S_4$. For the polynomial $f(x)$ to have a root in a cubic field, its Galois group must have a subgroup of index~$3$. Both $S_4$ and $A_4$ have such subgroups. However, if the Galois group is $S_4$, the splitting field has degree~$24$, and the field generated by a single root is a quartic extension of $\Q$. Such a root cannot lie in a cubic field. 
	
	The only case where a root of $f(x)$ could lie in a cubic field is if $f(x)$ were reducible over $\Q$, factoring into a linear term and a cubic term. Let us examine if $f(x)$ can have a rational root. By Rational Root Theorem, any rational root $k$ of $f(x)$ must be an integer dividing the constant term $-3(3r^2+s^2)^2$. Suppose such an integer root $k$ exists. We assume $s \neq 0$ and $\gcd(r,s)=1$. Reducing the equation $f(k)=0$ modulo $s$ eliminates terms with coefficients divisible by $s$, yielding
	$$k^4 - 18r^2k^2 - 27r^4 \equiv 0 \pmod s.$$
	Viewing this as a quadratic equation in $k^2$ gives the discriminant $\Delta = 3\cdot (12r^2)^2$. For an integer solution $k$ to exist, $\Delta$ must be a quadratic residue modulo $s$. This implies that $3$ must be a quadratic residue modulo every prime factor of $s$. This condition fails for many choices of $s$ (for instance, if $s$ is divisible by $5$ or $7$). Furthermore, for the cases where the modular condition is satisfied, the condition $|s| < r$ (implied by $\theta \in (0, \pi)$) ensures that the magnitude of the constant term $-3(3r^2+s^2)^2$ dominates the intermediate terms, preventing $f(k)$ from vanishing for any divisor $k$. Thus, no such integer $k$ exists.
	
	Since $f(x)$ is a degree~$4$ polynomial with no rational roots, it cannot factor as (linear) $\times$ (cubic). Its only possible factorization over $\Q$ would be into two irreducible quadratic factors. This occurs if and only if its Galois group is a subgroup of the dihedral group $D_4$. Even in this case, the roots generate extensions of degree~$2$ or $4$, neither of which can be contained in a cubic field.
	
	Therefore, the polynomial $f(x)$ can never have a root in a cubic field $K$. This rules out the existence of a $3$-torsion point in $E_{n,\theta}(K)$. Hence, the torsion subgroup must be $\Z/2\Z \times \Z/2\Z$, and the result follows.
\end{proof}

\subsection{Fields where degree is not coprime to $6$}
The third question asks about fields with degrees divisible by $2$ or $3$. The case of degree $2^d$ was already answered by Theorem~2.2 of \cite{DasSaikia}. We now address the case of degree~$6$.

\begin{theorem}\label{th3}
	Suppose $n$ is a square-free natural number other than $1, 2, 3$ or $6$. Let $K$ be a real sextic number field. 
	If  $2r(r-s)$ is not a square in $K$, and the third division polynomial of $E_{n,\theta}$ remains irreducible over the quadratic subfields of $K$,
	then $n$ is a $(K, \theta)$-congruent number if and only if it is a properly $(K, \theta)$-congruent number.
\end{theorem}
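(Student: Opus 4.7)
The plan is to follow the strategy of Theorems~\ref{th1} and~\ref{th2}: set $E = E_{n,\theta}$, start from $E(\Q)_{\text{tors}} \cong \Z/2\Z\times\Z/2\Z$ by Proposition~2.1 of~\cite{DasSaikia}, and prove that under the stated hypotheses no new torsion enters over $K$, so that the conclusion follows from Lemma~2.3 of~\cite{DasSaikia}. Since $[K:\Q]=6$, any additional torsion point must contribute a point of order $4$, a point of order $3$, or a point of some prime order $p \ge 5$; the first hypothesis is designed to kill the first case and the second hypothesis to kill the second.

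To rule out 4-torsion, I will invoke the classical 2-descent criterion for curves with full rational 2-torsion: a 2-torsion point $T_i=(e_i,0)$ lies in $2E(K)$ if and only if $e_i-e_j$ is a square in $K$ for each $j\ne i$. Applied to $E_{n,\theta}$ with roots $\{0,\,-(r+s)n,\,(r-s)n\}$ and $K$ real with $|s|<r$, two of the three 2-torsion points are immediately excluded because in each case one of the required differences is negative and hence not a square in the real field $K$. The surviving candidate $T=((r-s)n,0)$ requires both $(r-s)n$ and $2rn$ to be squares in $K$; since their product equals $2r(r-s)n^2$, the first hypothesis guarantees that this condition fails. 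Hence no 2-torsion point lies in $2E(K)$, and $E(K)$ contains no point of $2$-power order beyond $E[2]$.

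For 3-torsion, the relevant $x$-coordinates are precisely the roots of the quartic $f(x)=x^4-6(3r^2+s^2)x^2+8s(9r^2-s^2)x-3(3r^2+s^2)^2$ from Theorem~\ref{th2}. A hypothetical root $x_0\in K$ would have minimal polynomial dividing $f$ and of degree dividing $[K:\Q]=6$, which leaves degrees $1$, $2$, or $3$. Degree~$1$ is excluded by the rational-root analysis carried out in Theorem~\ref{th2}; degree~$3$ would force $f$ to split off a linear rational factor, again impossible by Theorem~\ref{th2}; and degree~$2$ would place $x_0$ in a quadratic subfield $\Q(x_0)\subseteq K$ over which $f$ acquires the linear factor $x-x_0$, directly contradicting the second hypothesis. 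Hence $f$ has no root in $K$, so $E(K)$ contains no 3-torsion, and consequently no torsion point of order divisible by $3$.

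The step I expect to be the main obstacle is the exclusion of $p$-torsion for primes $p\ge 5$, since the explicit hypotheses of the theorem do not speak to it. My plan here is to recycle the argument from the proof of Theorem~\ref{th1}: invoke the Gonz\'{a}lez-Jim\'{e}nez--Najman classification~\cite{GonzalezNajman} of torsion growth for elliptic curves $E/\Q$ with $E(\Q)_{\text{tors}}\cong\Z/2\Z\times\Z/2\Z$, and go through the small primes one by one to verify that the degrees at which a $p$-torsion point can first be defined, together with the constraints on the associated mod-$p$ Galois representation, are incompatible with the existence of such a point in a real sextic field. Once all three regimes are handled, $E(K)_{\text{tors}}=E[2]$, and Lemma~2.3 of~\cite{DasSaikia} finishes the proof.
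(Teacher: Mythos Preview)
Your proposal is correct and follows the same three-part decomposition as the paper's own proof: exclude $4$-torsion via the $2$-divisibility criterion (the paper packages your explicit computation as a citation of Lemma~2.7 of~\cite{DasSaikia}), exclude $3$-torsion via the quartic $f(x)$, and exclude $p$-torsion for $p\ge 5$ by appeal to~\cite{GonzalezNajman} as in Theorem~\ref{th1}. Your minimal-polynomial-degree argument for the $3$-torsion step is in fact tidier than the paper's somewhat informal treatment of that case.
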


\begin{proof}
	Let $K$ be a real sextic field. We must show that $E_{n,\theta}(K)_{\text{tors}} \cong \Z/2\Z \times \Z/2\Z$.
	\begin{enumerate}
		\item \textbf{Nonexistence of $4$-torsion:} A point of order $4$ exists if and only if one of the $2$-torsion points is divisible by $2$. As shown in Lemma~2.7 of \cite{DasSaikia}, this happens if and only if $2r(r-s)$ is a square in $K$. However, the first technical hypothesis of the theorem explicitly rules this out, so there are no points of order~$4$.
		\item \textbf{Nonexistence of $3$-torsion:} As established in the proof of Theorem~\ref{th2}, the existence of a $3$-torsion point depends on the reducibility of the fourth-degree polynomial $f(x)$. Since $[K:\Q]=6$, $f(x)$  can  potentially have a root in $K$ if it factors over $\Q$ into two quadratics or a cubic and a linear factor, which we have ruled out. If $K$ is a cyclic extension of $\Q$, for instance, it contains a unique cubic subfield, and our argument from Theorem~\ref{th2} applies. The second technical hypothesis  ensures  that a $3$-torsion point does not arise from a quadratic subfield and then become rational over $K$. In most cases, since $f(x)$ is irreducible over $\Q$, it cannot have a root in $K$. 
		\item \textbf{Nonexistence of $5$-torsion and higher:} Since $[K:\Q]=6$ is coprime to  any prime~$\geq 5$, our argument from Theorem~\ref{th1} shows that no torsion points of prime order $p \ge 5$ can appear.
	\end{enumerate}
	Thus, all torsion points are $2$-torsion, so $E_{n,\theta}(K)_{\text{tors}} = E_{n,\theta}(K)[2] \cong \Z/2\Z \times \Z/2\Z$, as required.
\end{proof}

\subsection{Exceptional cases  where $n=1, 2, 3$ and $6$} 
The theorems in \cite{DasSaikia} exclude the cases $n = 1, 2, 3, 6$ because for these values, $E_{n,\theta}(\Q)_{\text{tors}}$ can be larger than $\Z/2\Z \times \Z/2\Z$, see~\cite{Fujiwara}, which complicates the analysis. 

We use the following well-known result to establish our  next result.
\begin{proposition}  \label{prop1}
	Suppose $E$ is an elliptic curve over a number field $K$. Suppose
	$d\in K\setminus K^2$ and $E^d$ is the quadratic $d$-twist of $E$. Then	
	$$\text{rank}~E(K(\sqrt{d}))=\text{rank}~E(K)+\text{rank}~E^{d}(K).$$
\end{proposition}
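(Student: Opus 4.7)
The plan is to use Galois descent on the quadratic extension $L = K(\sqrt{d})$ and identify the isotypic components of the Galois action on the Mordell--Weil group tensored with $\Q$. Let $\sigma$ denote the nontrivial element of $\text{Gal}(L/K)$, so that $\sigma(\sqrt{d}) = -\sqrt{d}$. Since $\text{Gal}(L/K) \cong \Z/2\Z$ has order prime to the characteristic of $\Q$, the $\Q$-vector space $V \colonequals E(L) \otimes_\Z \Q$ splits as $V = V^+ \oplus V^-$, where $V^{\pm}$ denote the $\pm 1$-eigenspaces of $\sigma$, realized via the idempotents $\tfrac{1}{2}(1 \pm \sigma)$. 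By the Mordell--Weil theorem, $\dim_\Q V = \text{rank}~E(L)$, so it suffices to compute the dimensions of the two eigenspaces.

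For the $+1$-eigenspace, I would observe that $(E(L) \otimes \Q)^{\sigma} = E(L)^{\sigma} \otimes \Q$ because tensoring with $\Q$ is flat and kills torsion, and $E(L)^{\sigma} = E(K)$. Hence $\dim_\Q V^+ = \text{rank}~E(K)$.

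For the $-1$-eigenspace, the key ingredient is the twist isomorphism. By definition of the quadratic $d$-twist, there is an $L$-isomorphism $\phi: E^d \to E$ whose associated cocycle is $\sigma \mapsto [-1]$; concretely, on Weierstrass models $y^2 = f(x)$ for $E$ and $dy^2 = f(x)$ for $E^d$, the map $\phi(x,y) = (x, \sqrt{d}\, y)$ satisfies $\sigma \circ \phi = -\phi \circ \sigma$ as maps $E^d(L) \to E(L)$. Consequently, $\phi$ induces a $\Q$-linear isomorphism $E^d(L) \otimes \Q \to E(L) \otimes \Q$ that intertwines the $\sigma$-action on the source with the $(-\sigma)$-action on the target. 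Under this identification, the $+1$-eigenspace $E^d(K) \otimes \Q$ of $\sigma$ on $E^d(L) \otimes \Q$ corresponds to the $-1$-eigenspace $V^-$ of $\sigma$ on $V$, giving $\dim_\Q V^- = \text{rank}~E^d(K)$.

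Adding the two dimensions yields the claimed rank formula. The main technical point is verifying the twisted intertwining relation $\sigma \circ \phi = -\phi \circ \sigma$, which is a direct computation once the Weierstrass form of the twist is fixed; everything else is formal linear algebra over $\Q[\sigma]$. I would therefore devote a short paragraph to spelling out the twist relation on coordinates and otherwise keep the argument at the level of eigenspace decomposition.
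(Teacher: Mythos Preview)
Your argument is correct and is essentially the standard proof of this rank formula via the eigenspace decomposition of $E(L)\otimes\Q$ under the Galois involution, together with the twisted intertwining relation for the $L$-isomorphism $\phi:E^d\to E$. The only minor point worth tightening is the sentence ``$(E(L)\otimes\Q)^{\sigma}=E(L)^{\sigma}\otimes\Q$ because tensoring with $\Q$ is flat and kills torsion'': flatness alone does not give this identification, but the trace argument you implicitly use (namely that $\tfrac{1}{2}(1+\sigma)$ projects $V$ onto $V^+$ and lands in the image of $E(K)\otimes\Q$) does, so the conclusion stands.

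As for comparison with the paper: the paper does not prove this proposition at all. It is introduced with the phrase ``We use the following well-known result'' and is stated without proof or reference. So your write-up supplies what the paper omits; there is no alternative approach in the paper to contrast with.
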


As an interesting application of the above proposition, we prove the following.
\begin{theorem}
	Let $m$ be any positive square-free congruent number. Then $n=1$ is a properly $(\mathbb{Q}(\sqrt{m}), \pi/2)$-congruent number.
\end{theorem}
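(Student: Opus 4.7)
The plan is to apply Proposition~\ref{prop1} directly to the curve $E_{1,\pi/2}$, exploiting the fact that the classical congruent number curve for $m$ is itself the quadratic $m$-twist of the curve for $n=1$.

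First I would specialize the setup: for $\theta=\pi/2$ we have $\cos\theta=0$, so in the normalized form $\cos\theta=s/r$ with coprime $s,r$ we take $s=0$, $r=1$. The associated curve is
\[
E_{1,\pi/2}:\ y^2=x(x+1)(x-1)=x^3-x,
\]
which is the classical congruent number curve $E_1$. By the definition given in the introduction, $n=1$ is properly $(\mathbb{Q}(\sqrt m),\pi/2)$-congruent if and only if $E_1$ has positive rank over $\mathbb{Q}(\sqrt m)$.

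Next I would identify the relevant quadratic twist. A direct change of variables $(x,y)\mapsto (mx, m\sqrt{m}\, y)$ shows that the $m$-twist of $E_1$ is
\[
E_1^{m}:\ y^2=x^3-m^2x,
\]
which is precisely the congruent number curve $E_m$ associated with $m$. Since $m$ is a positive square-free congruent number, one has $m\geq 5$ (because $1,2,3,4$ are not congruent), so in particular $m\notin \mathbb{Q}^2$ and Proposition~\ref{prop1} applies with $K=\mathbb{Q}$ and $d=m$. It yields
\[
\operatorname{rank} E_1\bigl(\mathbb{Q}(\sqrt m)\bigr)=\operatorname{rank} E_1(\mathbb{Q})+\operatorname{rank} E_m(\mathbb{Q}).
\]

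Finally, since $m$ is a congruent number, the classical equivalence (recalled in the introduction via \cite{Koblitz}) gives $\operatorname{rank} E_m(\mathbb{Q})\geq 1$. Therefore $\operatorname{rank} E_1(\mathbb{Q}(\sqrt m))\geq 1$, which means $n=1$ is properly $(\mathbb{Q}(\sqrt m),\pi/2)$-congruent. There is no real obstacle here; the only small point to check is that $m$ is not a rational square so that $\mathbb{Q}(\sqrt m)$ is a genuine quadratic extension, which follows automatically from $m$ being a square-free congruent number.
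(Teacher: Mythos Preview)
Your proof is correct and follows essentially the same route as the paper: identify $E_1^{m}\cong E_m$ and apply Proposition~\ref{prop1} with $K=\mathbb{Q}$, $d=m$ to conclude $\operatorname{rank} E_1(\mathbb{Q}(\sqrt m))\geq \operatorname{rank} E_m(\mathbb{Q})\geq 1$. The only cosmetic difference is that the paper also records Fermat's result $\operatorname{rank} E_1(\mathbb{Q})=0$, which you correctly observe is not needed for the inequality.
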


\begin{proof}
	For the case $\theta=\pi/2$, the elliptic curve is $E_{n}: y^2 = x^3 - n^2x$. The number $n$ is properly $(\mathbb{Q}(\sqrt{m}), \pi/2)$-congruent if the rank of $E_{n}(\mathbb{Q}(\sqrt{m}))$ is positive. By Proposition~\ref{prop1}  for $n=1$, one has
	$$ \text{rank}(E_{1}(\mathbb{Q}(\sqrt{m}))) = \text{rank}(E_{1}(\mathbb{Q})) + \text{rank}(E_{m}(\mathbb{Q})).$$
	The number $n=1$ is not a congruent number, a result first shown by Fermat. Thus, $\text{rank}(E_{1}(\mathbb{Q})) = 0$.
	By definition, a positive integer $m$ is a congruent number if and only if   $\text{rank}(E_{m}(\mathbb{Q})) > 0$, see~\cite{Koblitz}. Hence,  we get 
	$ \text{rank}(E_{1}(\mathbb{Q}(\sqrt{m}))) > 0 ,$ which implies that
	$n=1$ is a properly $(\mathbb{Q}(\sqrt{m}), \pi/2)$-congruent number for any square-free congruent number $m$.
\end{proof}

An analysis of these exceptional cases yields the following result that provides an answer to the fourth question. 
\begin{theorem} \label{th4}
	\begin{itemize}
		\item [(i)] The number $n=1$ is $(\Q(\sqrt{3}), 2\pi/3)$-congruent  but not properly $(\Q(\sqrt{3}), 2\pi/3)$-congruent.
		\item [(ii)] The number $n=3$  is properly $\Q(\sqrt{13}), \pi/3)$-congruent  with a $(\Q(\sqrt{13}), \pi/3)$-triangle given as 
		$(\sqrt{13}/2, 24 \sqrt{13}/13, 43\sqrt{13}/26)$.
		
		\item [(iii)] The umber $n=2$ is properly  $(\Q(7), 2\pi/3)$-congruent with a $(\Q(\sqrt{7}), 2\pi/3)$-triangle given as
		$(3\sqrt{7}/7,  8 \sqrt{7}/3,61 \sqrt{7}/21)$.
		
		\item [(iv)] The number $n=6$ is properly $(\mathbb{Q}(\sqrt{5}), \pi/3)$-congruent  with a  $(\mathbb{Q}(\sqrt{5}), \pi/3)$-triangle given as 
		$( \sqrt{5},  12\sqrt{5}/5, 13\sqrt{5}/5)$.
	\end{itemize}
\end{theorem}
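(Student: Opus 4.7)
My plan is to treat the negative statement (i) separately from the three explicit constructions (ii)--(iv), in both cases exploiting Proposition~\ref{prop1} as the main tool for splitting Mordell--Weil ranks over real quadratic fields.

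For (i), I would first exhibit an explicit $(\Q(\sqrt{3}),2\pi/3)$-triangle to certify that $n=1$ is $(\Q(\sqrt{3}),2\pi/3)$-congruent; the triple $(u,v,w)=(2,2,2\sqrt{3})$ does the job, since $\tfrac{1}{2}uv\sin(2\pi/3)=\sqrt{3}$ and $u^2+v^2-2uv\cos(2\pi/3)=12=w^2$, with all three sides lying in $\Q(\sqrt{3})$. For the \emph{not properly} half I would apply Proposition~\ref{prop1} to $E_{1,2\pi/3}\colon y^2=x(x+1)(x-3)$ with $d=3$, obtaining
\[
\rk\, E_{1,2\pi/3}(\Q(\sqrt{3})) \;=\; \rk\, E_{1,2\pi/3}(\Q) + \rk\, E_{1,2\pi/3}^{(3)}(\Q),
\]
where the quadratic twist admits the Weierstrass model $y^2=x(x+3)(x-9)$ after clearing $\sqrt{3}$. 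The task reduces to showing that both summands vanish, which I would verify by a standard $2$-descent or by locating the two curves in Cremona's tables, whence the rank of $E_{1,2\pi/3}$ over $\Q(\sqrt{3})$ is zero and proper congruence is ruled out.

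For parts (ii)--(iv) the strategy is uniform. Given each listed triple $(u,v,w)$, I would first (a) verify that the sides lie in the stated quadratic field $K$, (b) check the area identity $\tfrac{1}{2}uv\sin\theta=n\sqrt{r^2-s^2}$, and (c) confirm the remaining side via the law of cosines $w^2=u^2+v^2-2uv\cos\theta$; these are short arithmetic computations (e.g.\ in (ii) one reads off $uv=12$, area $3\sqrt{3}$, and $u^2+v^2-uv=1849/52=w^2$). Once the triangle has been validated, the standard triangle-to-curve correspondence produces a $K$-rational point $P$ on $E_{n,\theta}$, and to upgrade $(K,\theta)$-congruence to proper $(K,\theta)$-congruence I would confirm that $P$ is of infinite order, either directly by checking its $x$-coordinate against the roots of the $2$- and $3$-division polynomials of $E_{n,\theta}$, or by splitting $E_{n,\theta}(K)$ via Proposition~\ref{prop1} and exhibiting a non-torsion point on an appropriate quadratic twist defined over $\Q$.

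The main obstacle lies in (i): both $E_{1,2\pi/3}(\Q)$ and its twist by $3$ must be shown to have Mordell--Weil rank zero, and since $n=1$ is one of the four exceptional values whose torsion over $\Q$ can be strictly larger than $\Z/2\Z\times\Z/2\Z$, one must first pin down the full torsion subgroup before descent can proceed cleanly. The analogous verifications in (ii)--(iv) are essentially mechanical once the triangle has been validated, the only subtle point being the careful exclusion of enlarged torsion over the relevant quadratic field $K$ when certifying that the constructed point $P$ has infinite order.
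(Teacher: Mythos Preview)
Your proposal is correct and rests on the same core device as the paper, namely Proposition~\ref{prop1}, but the direction of the argument in parts (ii)--(iv) is reversed. The paper starts on the curve side: for each case it locates an explicit non-torsion $\Q$-point on the quadratic twist $E_{nd,\theta}$ (e.g.\ $(-12,36)$ on $E_{14,2\pi/3}$ in (iii), $(-15,225)$ on $E_{30,\pi/3}$ in (iv)), invokes Proposition~\ref{prop1} to conclude positive rank over $\Q(\sqrt{d})$, and then pushes the point through the maps $\phi,\psi$ of \cite{JanfadaSalami} to manufacture the listed triangle. You instead take the stated triangle as given, verify it arithmetically, pull it back to a $K$-point on $E_{n,\theta}$, and then certify non-torsion. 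Both routes are valid; yours is more self-contained once the triangle is in hand, while the paper's explains where the triangle came from in the first place.

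For (i) you actually supply more than the paper does: the paper only checks (via Proposition~\ref{prop1} and a machine computation) that $\rk E_{1,2\pi/3}(\Q(\sqrt{3}))=0$, establishing the ``not properly'' half, whereas you also exhibit the explicit witness $(2,2,2\sqrt{3})$ for the ``is congruent'' half. Your identification of the twist $E_{1,2\pi/3}^{(3)}$ with the model $y^2=x(x+3)(x-9)$ is correct, and the two rank-zero verifications you propose (descent or table lookup) are exactly what underlies the paper's SageMath call.
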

\begin{proof}
	\begin{itemize}
		\item [(i)]  
		For this case, the  corresponding  curve is $E_{1, 2\pi/3}: y^2 =  x^3-2x^2-3x$.
		Thanks to Proposition~\ref{prop1} and with the help of SageMath, the  rank of this curve over $\Q(\sqrt{3})$ is $0$. Therefore, $n=1$ is an example of a number that is $(\Q(\sqrt{3}), 2\pi/3)$-congruent but not properly so.
		\item [(ii)]	 
		This has been treated in Example~4.1 in   \cite{JanfadaSalami}.
		
		\item [(iii)]	 For this case, we consider the  elliptic curve   $E_{2, 2\pi/3}: y^2 =  x^3-4x^2-12x$ and its twist
		by $d=7$ given by $E_{14, 2\pi/3}: y^2 =  x^3-28x^2-588x$. The rank of  $E_{2, 2\pi/3}$ over $\Q$ is zero, but 
		$E_{14, 2\pi/3}$ has rank~$2$ with generators $P_1=(-12,36)$ and $P_2=(-7,49)$. Now,  the point $P_1$ gives us the 
		above triangle
		using the maps $\phi$ and $\psi$ defined in the proof of  Theorem~1.1  in \cite{JanfadaSalami}. 
		
		\item [(iv)]  Let $n=6$, $m=5$, and $\theta = \pi/3$ (so $r=2, s=1$). 
		The twisted curve is $E_{30, \pi/3}: y^2 =  x^3 + 60x^2 - 2700x$, which has rank~$1$ over $\Q$ with a 
		generator $Q = (-15, 225)$~\cite{JSDP}.
		Therefore, $\text{rank}(E_{6, \pi/3}(\mathbb{Q}(\sqrt{5}))) > 0$. 
		Using the maps $\phi$ and $\psi$ defined in the proof of  Theorem~1.1  in \cite{JanfadaSalami}, one can get the above given  
		$(\mathbb{Q}(\sqrt{5}), \pi/3)$-triangle corresponding to the point $Q$.
		
	\end{itemize}
\end{proof}


\end{document}